\newtheorem{theorem}{Theorem}[section]
\newtheorem{lemma}[theorem]{Lemma}
\def\cqedsymbol{\ifmmode$\lrcorner$\else{\unskip\nobreak\hfil
\penalty50\hskip1em\null\nobreak\hfil$\lrcorner$
\parfillskip=0pt\finalhyphendemerits=0\endgraf}\fi}
\title{Two lower bounds for $p$-centered colorings}
\author{
Lo\"ic Dubois\affiliationmark{1}
\and
Gwena\"{e}l Joret\affiliationmark{2}\thanks{supported by an ARC grant from the Wallonia-Brussels Federation of Belgium and a CDR grant from the National Fund for Scientific Research (FNRS).}
\and  
Guillem Perarnau\affiliationmark{3} \\
\and Marcin Pilipczuk\affiliationmark{4}\thanks{This research is a part of a project that has received funding from the European Research Council (ERC) under the European Union's Horizon 2020 research and innovation programme
under grant agreement 714704.}
\and François Pitois\affiliationmark{1}
}
\affiliation{
  ENS Lyon, France\\
  Computer Science Department, Universit\'{e} Libre de Bruxelles, Belgium\\
  Departament de Matem\`atiques, UPC, Spain \\
  Institute of Informatics, University of Warsaw, Poland 
}
\theoremstyle{plain}
\let\plainqed\qedsymbol
\newcommand{\claimqed}{$\lrcorner$}
\newcommand{\subdiv}[2]{#1^{(#2)}}
\newcommand{\twclass}{\subdiv{\mathcal{G}}{6\tw}}
\newcommand{\tw}{\mathrm{tw}}
\keywords{$p$-centered coloring, bounded expansion, polynomial expansion}
\begin{document}
\publicationdetails{22}{2020}{4}{9}{6543}

\maketitle

\begin{abstract}
Given a graph $G$ and an integer $p$, a coloring $f : V(G) \to \mathbb{N}$ is \emph{$p$-centered}
if for every connected subgraph $H$ of $G$, either $f$ uses more than $p$ colors on $H$ or there is a color that appears exactly once in $H$. 
The notion of $p$-centered colorings plays a central role in the theory of sparse graphs. 
In this note we show two lower bounds on the number of colors required in a $p$-centered coloring. 

First, we consider monotone classes of graphs whose shallow minors have average degree bounded polynomially in the radius, or equivalently (by a result of Dvo\v{r}\'ak and Norin), admitting strongly sublinear separators. We construct such a class such that $p$-centered colorings require a number of colors super-polynomial in $p$. This is in contrast with a recent result of Pilipczuk and Siebertz, who established a polynomial upper bound in the special case of graphs excluding a fixed minor. 

Second, we consider graphs of maximum degree $\Delta$. D\k{e}bski, Felsner, Micek, and Schr\"{o}der recently proved that these graphs have $p$-centered colorings with $O(\Delta^{2-1/p} p)$ colors. 
We show that there are graphs of maximum degree $\Delta$ that require $\Omega(\Delta^{2-1/p} p  \ln^{-1/p}\Delta)$ colors in any $p$-centered coloring, thus matching their upper bound up to a logarithmic factor. 
\end{abstract}

\begin{textblock}{20}(0, 12.9)
\includegraphics[width=40px]{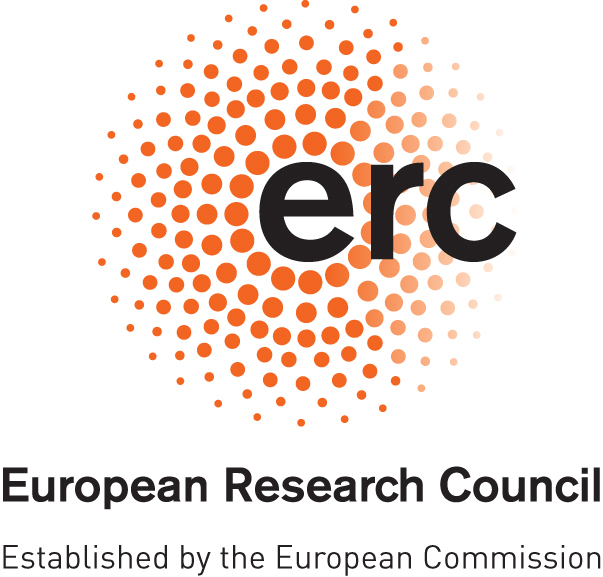}%
\end{textblock}
\begin{textblock}{20}(-0.25, 13.3)
\includegraphics[width=60px]{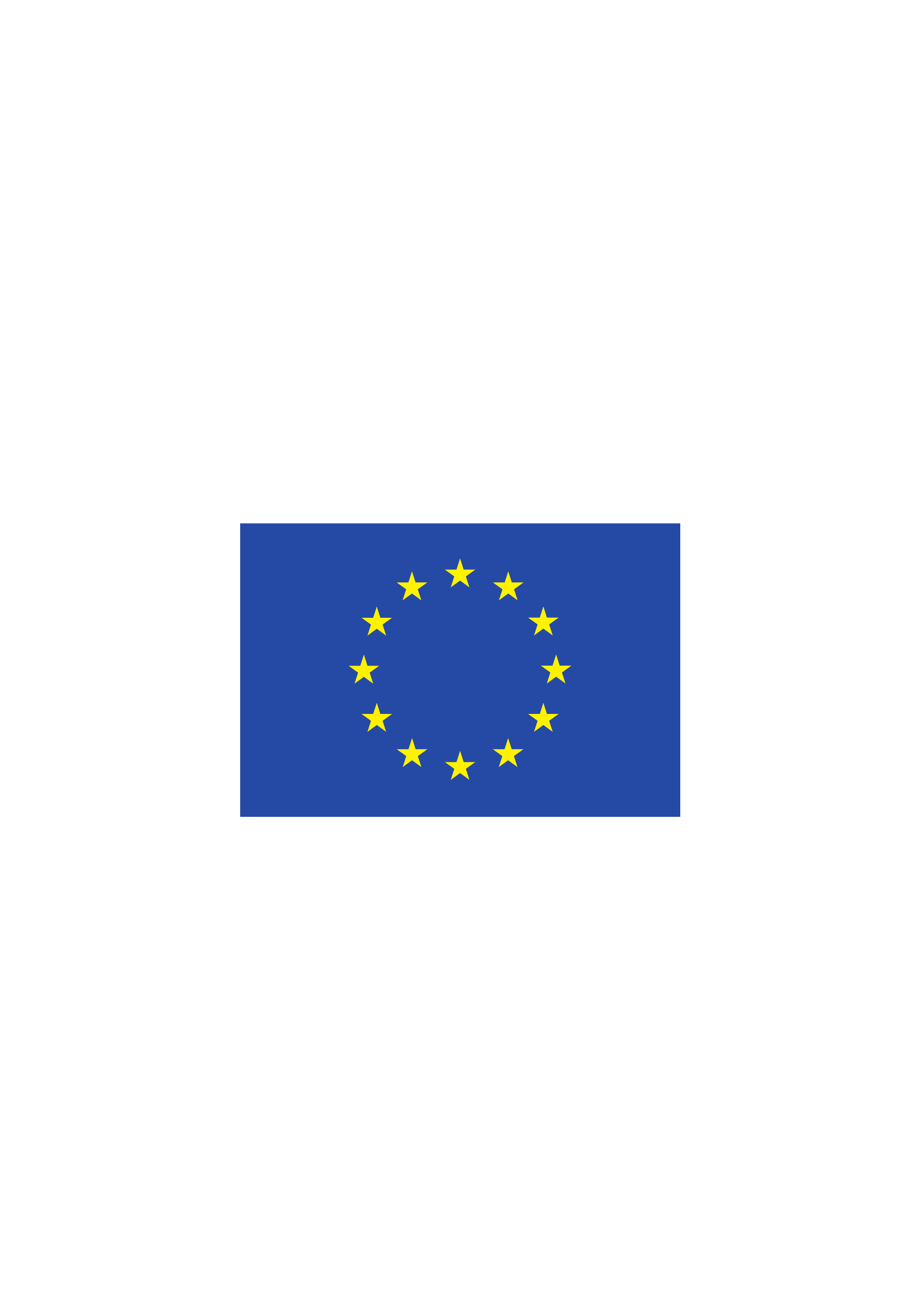}%
\end{textblock}

\section{Introduction}
Given a graph $G$ and an integer $p$, a coloring $f : V(G) \to \mathbb{N}$ is \emph{$p$-centered}
if for every connected subgraph $H$ of $G$, either $f$ uses more than $p$ colors on $H$ (i.e., $|f(V(H))| > p$) or 
there is a color that appears exactly once in $H$ (i.e., there exists $i \in \mathbb{N}$ such that $|f^{-1}(i) \cap V(H)| = 1$).

The notion of $p$-centered colorings plays a crucial role in the theory of sparse graphs. 
First, a $p$-centered coloring of a graph $G$ with small number of colors is very useful in algorithm design, for example in the task of finding or counting small subgraphs~\cite{experiments}. 
Second, a classic result shows that a graph class has bounded expansion (in the sense of Ne{\v{s}}et{\v{r}}il and Ossona de Mendez~\cite{sparsity}) if and only if for every $p \in \mathbb{N}$ there exists $M(p)$ such that every graph $G$ admits a $p$-centered coloring with at most $M(p)$ colors~\cite{NesetrilM08}. 

A recent experimental study~\cite{experiments} shows that the existing algorithms that find $p$-centered colorings find colorings with too many colors for the subsequent algorithms
to be efficient. This motivated a number of recent theoretical results trying to establish better upper and lower bounds on the number of colors needed for a $p$-centered coloring for various classes of sparse graphs~\cite{DFMS19,GroheKRSS18,PilipczukS19}. 
In this note, we contribute two lower bounds to this direction.

\paragraph{Superpolynomial lower bound in graph classes of polynomial expansion.}
A \emph{minor model} of a graph $H$ in a graph $G$ is a collection $(I_h)_{h \in V(H)}$ of vertex-disjoint connected subgraphs of $G$ such that 
$h_1h_2 \in E(H)$ implies that there is an edge of $G$ with one endpoint in $I_{h_1}$ and one endpoint in $I_{h_2}$. Fix an integer $r \geq 0$.
A minor model is \emph{$r$-shallow}
if every graph $I_h$ is of radius at most $r$. 
For a graph $G$, by $\nabla_r(G)$ we denote the maximum density of $r$-shallow minors of $G$, that is, the maximum ratio $|E(H)|/|V(H)|$ over all graphs $H$
that admit an $r$-shallow minor model in $G$. 
For a graph class $\mathcal{G}$, we denote $\nabla_r(\mathcal{G}) = \sup \{\nabla_r(G)~|~G \in \mathcal{G}\}$. 
By definition, $\mathcal{G}$ is of \emph{bounded expansion} if for every $r \geq 0$ the value $\nabla_r(\mathcal{G})$ is finite. 

The class $\mathcal{G}$ is of \emph{polynomial expansion} if there exists a polynomial $q$ such that $\nabla_r(\mathcal{G}) \leq q(r)$ for every $r \geq 0$. 
The notion of polynomial expansion turned out to be pivotal for approximation algorithms:
a subgraph-closed graph class is of polynomial expansion if and only if it admits strongly sublinear separators~\cite{DvorakN16}
and such separators allow approximation schemes via local search for a number of optimization problems~\cite{Har-PeledQ17}.
It is natural to ask what other properties one can infer about graph classes of polynomial expansion, and in particular whether they admit $p$-centered colorings with a number of colors polynomial in $p$. 
Pilipczuk and Siebertz~\cite{PilipczukS19} recently proved that such a polynomial upper bound holds in the special case of graphs excluding a fixed minor, which motivates our investigation.  

In~\cite{GroheKRSS18} an intricate example of a graph class with polynomial expansion is shown to have super-polynomial (in the radius) weak coloring numbers.
(For precise definitions, we refer to the textbook~\cite{sparsity} or the recent lecture notes~\cite{sparsitynotes}.)
In Section~\ref{sec:polyexp} we show that the same graph class requires a super-polynomial (in $p$) number of colors for a $p$-centered coloring. More precisely, we show the following.
\begin{theorem}\label{thm:lb1}
There exists a graph class $\mathcal{G}$ and a constant $c>0$ such that
$\nabla_r(\mathcal{G}) \leq r+2$ for every integer $r \geq 0$, but 
for every integer $p \geq 1$ 
there is $G \in \mathcal{G}$ such that every $p$-centered coloring of $G$
requires at least $2^{c p^{1/2}}$ colors.
\end{theorem}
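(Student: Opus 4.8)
The plan is to reuse the graph class $\mathcal{G}$ constructed by Grohe, Kreutzer, Rabinovich, Siebertz, and Stavropoulos~\cite{GroheKRSS18}, for which the density bound $\nabla_r(\mathcal{G}) \le r+2$ is already available, and to upgrade their super-polynomial lower bound on weak coloring numbers into a lower bound on the number of colors in a $p$-centered coloring. First I would recall the (recursive, level-by-level) construction of the graphs $G_n \in \mathcal{G}$ and either re-derive or simply cite the inequality $\nabla_r(\mathcal{G}) \le r+2$; this part is inherited from~\cite{GroheKRSS18} and is not where the difficulty lies. The whole game is then the coloring lower bound, so I would set up the convenient contrapositive form of the definition: a coloring $f$ fails to be $p$-centered precisely when there is a connected subgraph $H$ using at most $p$ distinct colors in which every color that occurs occurs at least twice. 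Thus, to prove that some $G_n$ needs many colors, it suffices to show that for any coloring $f$ of $G_n$ with fewer than $2^{c p^{1/2}}$ colors one can always exhibit such a ``bad'' connected subgraph $H$.

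The core of the argument is the construction of $H$, which I would carry out by induction along the levels of $G_n$, exploiting pigeonhole against the small palette. At each level the construction of $\mathcal{G}$ offers many mutually isomorphic gadgets attached in a controlled way; if the palette has size $N$, then among sufficiently many such gadgets two must receive identical color patterns, and these two copies can be merged through the connection pattern of the construction so that, within the combined piece, each color is duplicated. Iterating this ``doubling'' across roughly $p^{1/2}$ levels simultaneously forces the palette that any valid coloring must use to grow like $2^{\Theta(p^{1/2})}$, while keeping the number of distinct colors inside $H$ under control. Concretely, I would maintain as an invariant that after $t$ levels the partial subgraph is connected, uses at most a budgeted number of distinct colors, and has every occurring color repeated; the trade-off is that spending $O(p^{1/2})$ levels costs $O(p^{1/2})$ distinct colors per level in the budget (hence the overall cap $\le p$), while each level triggers a fresh pigeonhole collision whose absence would require $N \ge 2^{c p^{1/2}}$.

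The step I expect to be the main obstacle is carrying out this doubling while simultaneously respecting all three constraints on $H$: it must stay connected, every color appearing in it must appear at least twice, and the total number of distinct colors must not exceed $p$. Connectivity and duplication pull toward taking large pieces, whereas the budget of $p$ distinct colors pulls toward taking small pieces, and the $p^{1/2}$ exponent is exactly the outcome of balancing the number of levels against the per-level color cost. Making this precise requires a careful choice of the parameters of $\mathcal{G}$ (the number of levels and the branching at each level) as a function of $p$, together with a pigeonhole estimate showing that $N < 2^{c p^{1/2}}$ is too small to avoid repeated color patterns at every level. An alternative, and perhaps cleaner, route would be to prove a general inequality bounding the weak $r$-coloring number from above by a function of the $p$-centered coloring number and then feed in the lower bound on $\mathrm{wcol}_r(\mathcal{G})$ from~\cite{GroheKRSS18}; the obstacle there is identical in spirit, namely obtaining the correct sub-exponential dependence between $r$, $p$, and the palette size, and I would keep it as a fallback if the direct construction becomes unwieldy.
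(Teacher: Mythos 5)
Your choice of graph class and the contrapositive setup coincide with the paper's, but the inductive engine you sketch is not the one that works, and the gap sits exactly at the step you flag as the main obstacle. The invariant you propose to maintain --- after each level the partial subgraph is connected, every occurring color is repeated, and the color budget is controlled --- cannot serve as the inductive hypothesis: if you ever had such a subgraph on at most $p$ colors you would already be done, and a single pigeonhole collision between two identically colored gadgets does \emph{not} duplicate every color, because the structure connecting the two copies (the attachment vertex and, crucially, the fresh vertices on the subdivided edges of the class from the Grohe et al.\ construction) carries colors that may occur only once. The paper's invariant is deliberately weaker and therefore maintainable: for each color $i$ in a growing set $I$ it keeps an ``$i$-threat'', a connected subgraph of bounded color load in which $i$ is the \emph{unique} color occurring exactly once. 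The lower bound is then not accumulated by per-level doubling of the palette; instead one shows that the two color sets $I_\bot$ and $I_x$ produced by the two branches of the recursion must be \emph{disjoint} (a common color $i$ would let one glue \emph{three} $i$-threats and two connecting paths into a connected subgraph with no singleton color and few colors, contradicting centeredness), so $|I|$ obeys Pascal's recurrence and equals $\binom{\pi+\tau}{\tau}$; taking $t=p$ forces $\binom{2p}{p}\ge 2^p$ colors for a $\Theta(p^2)$-centered coloring, which is where the exponent $p^{1/2}$ comes from. Your accounting (``$p^{1/2}$ levels, each doubling the required palette'') has the right numerology but no mechanism behind it.

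Two further concrete points. First, you never engage with the subdivision: any connected subgraph of $\subdiv{G_{p,t}}{6t}$ joining two root vertices picks up $6t$ fresh vertices, so the pigeonhole at each level must be taken over the $n_1 n_2^{6t}$ possible profiles (the color of a root vertex together with the colors along its connecting path); this is why the paper inflates the base independent sets to size $2n_1^{t+1}n_2^{6t^2}$ and introduces the two-palette $(p_1,p_2)$-centered bookkeeping. Without this, the per-level pigeonhole does not go through. Second, your fallback via weak coloring numbers is quantitatively hopeless: the natural transfer from a $p$-centered coloring with $k$ colors to a bound on $\mathrm{wcol}$ passes through the $\binom{k}{p}$ choices of color classes, so a superpolynomial lower bound on $\mathrm{wcol}_r$ only yields $k^{O(p)}\ge \mathrm{wcol}_r$, and after taking $p$-th roots this gives no nontrivial lower bound on $k$.
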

The proof of Theorem~\ref{thm:lb1} builds substantially on a recent lower bound of~\cite{DFMS19}
for graphs of bounded treewidth.

We remark here that if $\mathcal{G}$ is of polynomial expansion, 
then there exists a polynomial $q$ such that for every $G \in \mathcal{G}$
and $p \geq 1$ there exists a $p$-centered coloring of $G$ with at most $2^{2^{q(p)}}$ colors.
See Chapter~2 of the lecture notes~\cite{sparsitynotes} for an exposition.

\paragraph{Lower bound for graphs of bounded degree.}
A recent breakthrough result of D\k{e}bski, Felsner, Micek, and Schr\"{o}der~\cite{DFMS19} asserts
that graphs of bounded degree require much less colors for a $p$-centered coloring 
than was anticipated.
\begin{theorem}[\cite{DFMS19}]\label{thm:bnddeg}
There exists a constant $C > 0$ such that for every integers $\Delta \geq 1$ and $p \geq 1$,
every graph $G$ of maximum degree at most $\Delta$ admits a $p$-centered coloring
with at most $C \cdot \Delta^{2-1/p} \cdot p$ colors.
\end{theorem}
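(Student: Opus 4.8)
The plan is to use the probabilistic method, in the form of the Lov\'asz Local Lemma (LLL), applied to a uniformly random coloring $f:V(G)\to[N]$ with $N=\Oh(\Delta^{2-1/p}p)$ colors. First I would reformulate the condition so that it is amenable to local arguments. Call a connected subgraph $H$ a \emph{bad witness} if it uses at most $p$ colors and no color appears exactly once on it (equivalently, each color used appears at least twice); thus $f$ is $p$-centered iff $G$ has no bad witness. Passing to a spanning tree does not change the vertex set, hence not the color multiset, so it suffices to preclude bad witnesses that are \emph{subtrees} of $G$. This is convenient because the number of subtrees on $s$ vertices containing a fixed vertex of $G$ is at most $(e\Delta)^{s}$, a bound that uses the maximum degree.

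Next I would set up the LLL. For each subtree $T$ of $G$ let $A_T$ be the event that $T$ is a bad witness; two events $A_T,A_{T'}$ are dependent only if $V(T)\cap V(T')\neq\varnothing$. For a fixed $T$ on $s$ vertices, a short computation bounds $\Pr[A_T]$: using at most $p$ colors each at least twice amounts to choosing the (at most $p$) colors together with a surjection of the $s$ vertices all of whose fibers have size $\ge 2$, so $\Pr[A_T]\le\binom{N}{p}\,p^{\,s}/N^{s}$, and for the extremal size $s=2p$ (each of $p$ colors used exactly twice) this equals about $(2p-1)!!/N^{p}\approx(2p/(eN))^{p}$. Using weights $x_T=\mu^{|V(T)|}$ in the general LLL, the neighborhood sums converge as soon as $\mu$ is a small multiple of $1/\Delta$, and the LLL inequality reduces to requiring $\Pr[A_T]\le(c/\Delta)^{s}$ for all $s$ and a constant $c$. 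Checking this over all $s$ shows that the binding constraint occurs at $s=2p$, and it is satisfied precisely when $N=\Omega(\Delta^{2}p)$. This already yields a $p$-centered coloring with $\Oh(\Delta^{2}p)$ colors.

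The hard part is improving the base of the exponent from $\Delta^{2}$ to $\Delta^{2-1/p}$, i.e.\ shaving a factor $\Delta^{1/p}$. The scaling above is telling: the bound $N=\Omega(\Delta^{2}p)$ comes from balancing the per-vertex count $(e\Delta)^{2p}$ of size-$2p$ subtrees against the probability $(2p/(eN))^{p}$, and if one could reduce the count of the \emph{relevant} critical witnesses by a factor $\Delta^{\gamma}$, the bound would improve to $N=\Oh(\Delta^{2-\gamma/p}p)$; the target $2-1/p$ corresponds to $\gamma=1$. I would therefore replace the single random coloring by a product (two-coordinate) coloring with a threshold parameter $t$: one coordinate spends about $t$ colors on a structural preprocessing step---for instance a proper coloring, or a partition that reduces the effective degree seen by the second coordinate and forces the monochromatic pairs of the critical witnesses to be spread apart---so that the number of size-$2p$ subtrees that can still be bad for the random coordinate drops by a factor polynomial in $t$. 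Optimizing $t$ so that the preprocessing cost (on the order of $\Delta t$) balances the reduced probabilistic cost leads to the choice $t\approx\Delta^{1/p}$ and to the claimed $\Oh(\Delta^{2-1/p}p)$ colors.

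The main obstacle, then, is exactly this exponent improvement: making precise a preprocessing or LLL weighting that genuinely removes one factor of $\Delta$ from the count of the binding witnesses while costing only $\Oh(\Delta^{1/p})$ extra colors, and verifying that the refined LLL inequality holds simultaneously for every witness size $s$ (not only the extremal $s=2p$) with the correct dependence on $p$. The remaining ingredients---the reduction to subtrees, the subtree-counting bound, and the per-witness probability estimate---are routine once the maximum degree is used to control local multiplicities.
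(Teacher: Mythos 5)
First, a point of order: Theorem~\ref{thm:bnddeg} is not proved in this paper at all; it is imported from \cite{DFMS19}, so the comparison below is with the probabilistic argument of that reference (a single uniformly random coloring analyzed via the Lov\'asz Local Lemma over bad connected subgraphs), which is indeed the kind of argument your first two paragraphs set up.

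The genuine gap is exactly where you flag it: the passage from $\Oh(\Delta^2 p)$ to $\Oh(\Delta^{2-1/p}p)$ is the entire content of the theorem (the companion lower bound, Theorem~\ref{thm:lb2}, shows the exponent $2-1/p$ is essentially tight), and you leave it as an unspecified ``preprocessing step'' whose existence you do not establish. Moreover, the two-coordinate/threshold scheme you propose is a red herring: no preprocessing is needed, because the factor $\Delta^{1/p}$ you are missing is already sitting in the subtree count, which you quote in the lossy form $(e\Delta)^{s}$. A tree on $s$ vertices has $s-1$ edges, so the number of $s$-vertex subtrees of $G$ containing a fixed vertex is at most $(e\Delta)^{s-1}$, and the Local Lemma can exploit this saved factor of $\Delta$ by taking weights of the form $x_T=\nu\,\mu^{|V(T)|-1}$ with $\mu\asymp 1/\Delta$ rather than $x_T=\mu^{|V(T)|}$. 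At the critical size $s=2p$ (each of $p$ colors used exactly twice) the resulting condition reads $N^{p}\gtrsim p^{p}\Delta^{2p-1}$, i.e.\ $N\gtrsim p\,\Delta^{(2p-1)/p}=p\,\Delta^{2-1/p}$, which is the theorem; your own balance of $(e\Delta)^{2p}$ against $(2p/(eN))^{p}$ differs from this only by that off-by-one in the exponent, and that off-by-one is precisely the lost $\Delta^{1/p}$. Two further points that your sketch glosses over and that need verification: (i) minimal bad witnesses are \emph{not} confined to $2p$ vertices --- for $p=3$ the path colored $2,1,3,1,2,1,3$ uses $3$ colors, has no unique color, and every proper subpath has a unique color, so it is a minimal bad witness on $7>2p$ vertices --- hence the LLL condition must be checked for every size $s$ and every number $k\le\min(p,\lfloor s/2\rfloor)$ of colors, not just $s=2p$; one verifies that $(s,k)=(2p,p)$ is the binding case because $(s-1)/(s-k)\le 2-1/p$ throughout this range; and (ii) the per-witness probability should be $\sum_{k}\binom{N}{k}k^{s}N^{-s}$, summed over $k$, not only the $k=p$ term. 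With these repairs your first two paragraphs become a complete proof and your third paragraph can be discarded.
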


In Section~\ref{sec:bnddeg} we show that the dependency on $p$ and $\Delta$ in Theorem~\ref{thm:bnddeg} 
is optimal up to a logarithmic factor in $\Delta$.
\begin{theorem}\label{thm:lb2}
There exists a constant $c > 0$ such that for every integer $p \geq 1$ there exists $\Delta_p$ such that for all $\Delta \geq \Delta_p$ there exists a graph $G$ of maximum degree at most $\Delta$ such that every $p$-centered
coloring of $G$ requires at least $c \cdot \Delta^{2-1/p} p \ln^{-1/p}{\Delta}$ colors.
\end{theorem}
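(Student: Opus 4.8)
The plan is to produce the hard instance as a \emph{random} graph and to show, via a union bound over all colorings, that it has large $p$-centered chromatic number. Fix $p$, let $\Delta$ be large, put $d=\Delta/2$, choose $n$ to be a sufficiently large polynomial in $\Delta$, and let $G=G(n,q)$ with $q=d/n$ be the binomial random graph. First I would check that $G$ has maximum degree at most $\Delta$ with high probability: each degree is $\mathrm{Bin}(n-1,q)$ with mean $\approx \Delta/2$, and since $n$ is polynomial in $\Delta$ a Chernoff bound together with a union bound over the $n$ vertices keeps every degree below $\Delta/2+O(\ln\Delta)\le\Delta$. (For a hard bound one may delete one edge at each offending vertex, losing $o(n)$ edges and affecting nothing below.)

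The combinatorial core is the choice of forbidden configuration. I would use a \emph{doubled path}: a path $v_1v_2\cdots v_{2p}$ on $2p$ distinct vertices whose colors, under a coloring $f$, take exactly $p$ distinct values, each occurring exactly twice. Such a path is a connected subgraph on which $f$ uses at most $p$ colors and no color appears exactly once, so its presence certifies that $f$ is \emph{not} $p$-centered; hence every $p$-centered coloring of $G$ must avoid all doubled paths. For a fixed $f$ with $N$ colors, the number of doubled-path \emph{slots} (vertex tuples carrying an admissible color pattern, regardless of which edges of $G$ are present) is, when the color classes are balanced, of order
\[
(2p-1)!!\,\frac{n^{2p}}{N^{p}},\qquad (2p-1)!!=\frac{(2p)!}{2^{p}p!},
\]
the double factorial counting the arrangements of the $p$ doubled colors along the path. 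Each slot is realized in $G$ with probability $q^{2p-1}$, so the expected number of realized slots is $\mu_f\approx (2p-1)!!\,n\,d^{2p-1}N^{-p}$.

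Next I would bound, for each fixed $f$, the probability that $G$ realizes \emph{no} doubled path. This is a lower-tail (non-containment) event, for which the natural tool is Janson's inequality, giving $\Pr[\text{no realized slot}]\le\exp(-\mu_f+\tfrac12\delta_f)$, where $\delta_f$ sums $\Pr[\text{both realized}]$ over pairs of slots sharing an edge. Plugging this into a union bound over the at most $N^{n}$ colorings, $G$ has no $p$-centered $N$-coloring with high probability once $\mu_f-\tfrac12\delta_f>n\ln N$ for every $f$. Since $\ln N=\Theta(\ln\Delta)$, the requirement $\mu_f>n\ln N$ rearranges (the factor $n$ cancels) to $N^{p}\ln N\lesssim (2p-1)!!\,d^{2p-1}$, that is
\[
N\lesssim \big((2p-1)!!\big)^{1/p}d^{\,2-1/p}(\ln\Delta)^{-1/p}=\Theta\!\big(p\,\Delta^{2-1/p}(\ln\Delta)^{-1/p}\big),
\]
using $\big((2p-1)!!\big)^{1/p}=\Theta(p)$. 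Taking $N$ just below this threshold yields the stated bound, with $G$ as the required instance; note that the extremal factor $(2p-1)!!$, and hence the correct \emph{linear-in-$p$} coefficient, appears precisely because Janson captures the full abundance of overlapping doubled paths rather than a merely edge-disjoint sub-family (an edge-disjoint packing saturates at $O(n^2)$ configurations and gives only the far weaker exponent $(2p-1)/(2p-2)$).

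The main obstacle is the \emph{uniform} control of the correlation term: I must guarantee $\delta_f\le\tfrac12\mu_f$ simultaneously for all colorings $f$. For balanced $f$ this is comfortable: estimating $\sum_e\binom{m_e}{2}q^{4p-3}$ (with $m_e$ the number of slots through an edge $e$) a direct computation gives $\delta_f=O\!\big(p^{2}\Delta^{-1}\ln\Delta\big)\,\mu_f=o(\mu_f)$, and—crucially—this ratio does not depend on $n$, so it survives for every admissible choice of $n$. Balanced colorings also minimize $\mu_f$ (by convexity of the count of monochromatic pairs) and thus are the binding case for $\mu_f>n\ln N$. The genuinely delicate case is strongly \emph{unbalanced} colorings, where a few oversized classes make some edges lie in many slots and can inflate $\delta_f$ through the convexity of $\sum_e\binom{m_e}{2}$; I would dispose of these separately, either by truncating at classes of size $O(n/N)$ and checking that larger classes only create more (and more easily realized, because lower-color) forbidden configurations, thereby enlarging $\mu_f$ faster than $\delta_f$, or by a direct class-size-sensitive bound showing $\mu_f-\tfrac12\delta_f\ge\tfrac12\mu_f^{\mathrm{bal}}>n\ln N$ throughout. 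Establishing $\delta_f=O(\mu_f)$ across \emph{all} colorings, together with the routine optimization of $n$, is where the real work lies.
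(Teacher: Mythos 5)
Your overall strategy is exactly the paper's: take a binomial random graph with $q=\Theta\bigl(n^{(1-p)/(2p-1)}(\ln n)^{1/(2p-1)}\bigr)$, observe that a connected subgraph on $2p$ vertices using $p$ colors each exactly twice (a path of length $2p-1$) witnesses failure of $p$-centeredness, lower-bound the expected number of such witnesses for a fixed coloring, apply Janson's inequality, and union-bound over the at most $N^{n}$ colorings. The first-moment calculation, the cancellation of $n$ in the threshold $N^{p}\ln N\lesssim c_p\,d^{2p-1}$, and the $\Theta(p)$ coefficient coming from the $p$-th root of a factorial-type count all match the paper.

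The place where your proposal has a genuine gap is precisely the one you flag: uniform control of $\delta_f$ over \emph{all} colorings. Your formulation counts all doubled paths built from all monochromatic pairs, and for strongly unbalanced colorings this breaks plain Janson. Concretely, if a coloring has a single class of size $\Theta(n)$, then $\mu_f\approx n^{2p}q^{2p-1}$ but $\delta_f\gtrsim n^{4p-2}q^{4p-3}$, so $\delta_f/\mu_f\approx (nq)^{2p-2}=d^{2p-2}\to\infty$ and $\exp(-\mu_f+\delta_f/2)$ gives nothing; your first proposed fix (``larger classes enlarge $\mu_f$ faster than $\delta_f$'') is therefore false as stated, and you would need the extended Janson inequality or a case analysis to rescue it. The paper dissolves this issue before Janson is ever invoked: from any coloring with at most $n/2$ colors it extracts $m=n/4$ pairwise \emph{disjoint} monochromatic pairs $U_i=\{x_i,y_i\}$ (always possible since $\sum_j\lfloor|V_j|/2\rfloor\ge (n-n/2)/2$), and builds witnesses only from these pairs, in a rigid shape $x_{s_p}\cdots x_{s_1}\,y_{s_{\sigma(1)}}\cdots y_{s_{\sigma(p)}}$ with $\sigma(p)=1$. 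Then both $\mu=(m)_p(p-1)!\,q^{2p-1}$ and the correlation sum are identical for every coloring, the balanced/unbalanced dichotomy disappears, and the rigid shape also makes the intersection bound $|E_{\mathbf{s},\sigma}\cap E_{\mathbf{s}',\sigma'}|\le 2|\mathbf{s}\cap\mathbf{s}'|-2$ (hence $\delta=O((nq^2)^{2p-2})=o(\mu)$) a short argument. You lose only a constant factor in the count by discarding the non-disjoint pairs and the extra path arrangements, which does not affect the $\Theta(p)$ coefficient. I recommend adopting this reduction; with it, the rest of your outline goes through essentially as written.
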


We remark that this lower bound is obvious for $p=1$, and was previously known for $p=2$~\cite{fertin2004star}. 
Whether the $\ln^{-1/p}{\Delta}$ factor is necessary is an open problem. 

\section{Graphs of polynomial expansion}\label{sec:polyexp}
For a graph $G$ and an integer $t$, let $\subdiv{G}{t}$ be the graph obtained from $G$ by subdividing every edge
$t$ times, that is, replacing every edge of $G$ with path of length $t+1$ (with $t$ internal vertices). 
The newly inserted vertices are called the \emph{fresh} vertices and the vertices of $G$ are called the
\emph{root} vertices of $\subdiv{G}{t}$.

For a graph $G$, let $\tw(G)$ be the treewidth of $G$.
Grohe et al.~\cite{GroheKRSS18} observed that the class $\twclass$ consisting of the graph $\subdiv{G}{6\tw(G)}$ for
all graphs $G$ has polynomial expansion.
\begin{theorem}[\cite{GroheKRSS18}]
For every $G \in \twclass$ and integer $r \geq 0$, $\nabla_r(G) \leq r+2$.
\end{theorem}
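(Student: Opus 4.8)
The plan is to fix a graph $H$ together with an $r$-shallow minor model $(I_h)_{h \in V(H)}$ in $G' := \subdiv{G}{6\tw(G)}$, write $w := \tw(G)$, and prove the single inequality $|E(H)| \le (r+2)|V(H)|$; since $\nabla_r(G)$ is the supremum of $|E(H)|/|V(H)|$ over all such $H$, this suffices. The whole argument rests on splitting $V(H)$ according to how each branch set meets the root vertices of $G'$. I would call $I_h$ \emph{rooted} if it contains at least one root vertex and \emph{free} otherwise, and let $A, B \subseteq V(H)$ be the corresponding sets of vertices, so that $|V(H)| = |A| + |B|$. I would then bound the edges incident to $B$ and the edges inside $A$ separately, aiming for $2|B|$ and $(r+2)|A|$ respectively; together with $2 \le r+2$ this gives the theorem.

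The easy half concerns $B$. A free branch set avoids all root vertices, and since in $G'$ the fresh vertices of distinct subdivided edges are joined only through root vertices, a connected free $I_h$ must be contained in a single subdivided edge-path, where it is an interval of consecutive fresh (degree-two) vertices. Such an interval has at most two neighbours in $G'$ outside itself, so $h$ has degree at most $2$ in $H$. Summing degrees over $B$, the number of edges of $H$ incident to $B$ is at most $2|B|$.

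The substance is the bound on edges inside $A$. For $h \in A$ set $R_h := I_h \cap V(G)$, a nonempty set of roots; the $R_h$ are pairwise disjoint because the $I_h$ are. First I would check that $G[R_h]$ is connected: a path in $I_h$ between two roots can pass from one root to the next only by traversing an \emph{entire} subdivided edge (its interior vertices have degree two), so consecutive roots along it are adjacent in $G$. Thus $(R_h)_{h \in A}$ is a minor model in $G$. Next I would show that every edge of $H[A]$ corresponds to an edge of $G$ running between two distinct sets $R_{h_1}, R_{h_2}$: the witnessing $G'$-edge lies in the interior of some subdivided edge-path $P$ of an edge $uv$ of $G$, and because interior vertices have degree two, the portion of $P$ inside $I_{h_1}$ must extend back to an endpoint of $P$, forcing $u \in R_{h_1}$ (say) and symmetrically $v \in R_{h_2}$, with $P$ split between the two branch sets. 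Consequently $H[A]$ is a subgraph of the minor of $G$ obtained by contracting the sets $R_h$, which has treewidth at most $w$ and therefore at most $w|A|$ edges; so $|E(H[A])| \le w|A|$.

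Finally I would convert this treewidth bound into the desired dependence on $r$, which is where the factor $6$ in $6\tw(G)$ is used. In the splitting just described, the two pieces of $P$ together cover its $6w$ interior vertices, so one of $I_{h_1}, I_{h_2}$ reaches depth at least $3w$ into $P$ from its root; since the degree-two interior vertices force this depth to equal the distance within that branch set from its root to the meeting point, and a branch set has diameter at most $2r$, we get $3w \le 2r$, i.e.\ $w \le \tfrac{2r}{3} \le r+2$, \emph{whenever $H[A]$ has any edge at all}. Hence $|E(H[A])| \le w|A| \le (r+2)|A|$ in every case, and combining with the bound $2|B| \le (r+2)|B|$ for the edges incident to $B$ yields $|E(H)| \le (r+2)|V(H)|$. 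I expect the main obstacle to be making the structural claims about branch sets inside subdivided paths fully rigorous — in particular that a free branch set is genuinely a single interval with at most two exits, and that crossing an edge-path forces a branch set to reach a root and hence depth at least $3w$ — since these degree-two arguments are exactly where the choice $t = 6\tw(G)$ pays off.
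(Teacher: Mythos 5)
The paper does not actually prove this statement---it is imported verbatim from Grohe et al.~\cite{GroheKRSS18}---so there is no in-text proof to compare yours against; I can only assess your argument on its own terms, and it checks out. The split into rooted and free branch sets is sound. For the free part: a connected branch set avoiding all roots really is an interval of consecutive degree-two fresh vertices inside a single subdivided edge, hence has exactly two potential neighbours outside itself and degree at most $2$ in $H$, giving the $2|B|$ bound. For the rooted part, both degree-two claims you flag as the delicate steps do go through: a path in $I_h$ between two roots must traverse whole subdivided edges, so $G[R_h]$ is connected and $(R_h)_{h\in A}$ is a minor model in $G$ of a supergraph of $H[A]$; and an edge of $H[A]$ witnessed inside the subdivided path $P=p_0\cdots p_{6\tw(G)+1}$ of an edge $uv$ forces $p_0,\dots,p_j\subseteq I_{h_1}$ and $p_{j+1},\dots,p_{6\tw(G)+1}\subseteq I_{h_2}$ for some $j$, since an interior vertex of $P$ can reach a root only through $p_0$ or $p_{6\tw(G)+1}$. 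Then $|E(H[A])|\le \tw(G)\,|A|$ because minors do not increase treewidth and a treewidth-$w$ graph is $w$-degenerate, while $\max(j,6\tw(G)-j)\ge 3\tw(G)$ gives a path of length at least $3\tw(G)$ inside one branch set, so $3\tw(G)\le 2r$ whenever $H[A]$ has an edge. (The only unstated case, $\tw(G)=0$, is vacuous since then $G$ and hence $\subdiv{G}{6\tw(G)}$ is edgeless.) Altogether $|E(H)|\le \max\bigl(2,\tfrac{2r}{3}\bigr)\,|V(H)|\le (r+2)|V(H)|$, which is in fact slightly stronger than the stated bound. This is a clean, self-contained route: rather than invoking the generic calculus relating shallow minors of subdivisions to shallow minors of the original graph, you reduce everything to the degeneracy of minors of a bounded-treewidth graph plus the radius obstruction created by the $6\tw(G)$-fold subdivision, which is exactly why that factor is chosen.
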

They also observed that $\twclass$ admits only superpolynomial bounds for weak coloring numbers. 
We complete the analysis by showing that graphs in $\twclass$ require a superpolynomial number of colors (in $p$) for $p$-centered colorings.
\begin{theorem}\label{thm:lb:tw}
There exists a constant $c>0$ such that for every integer $p \geq 1$ 
there is $G \in \twclass$ such that every $p$-centered coloring of $G$
requires at least $2^{c p^{1/2}}$ colors.
\end{theorem}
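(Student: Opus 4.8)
Write $\chi_p(G)$ for the least number of colors in a $p$-centered coloring of $G$. The plan is to transfer the bounded-treewidth lower bound of~\cite{DFMS19} through the subdivision operation and then to balance parameters so that the resulting bound becomes $2^{c\sqrt{p}}$. Concretely, I would use that~\cite{DFMS19} supplies, for all integers $t$ and $q$, a graph $G_{t,q}$ of treewidth exactly $t$ with $\chi_q(G_{t,q}) \ge \binom{q+t}{t}$. The key point about the class $\twclass$ is that membership depends only on the treewidth of the \emph{original} graph: since $\tw(G_{t,q})=t$, the graph $\subdiv{(G_{t,q})}{6t}=\subdiv{(G_{t,q})}{6\tw(G_{t,q})}$ lies in $\twclass$ by definition, with no need for any subdivision/treewidth preservation lemma. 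It therefore suffices to understand how a $p$-centered coloring of $\subdiv{G}{6t}$ relates to a $q$-centered coloring of the base graph $G$.

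The heart of the argument is a subdivision transfer lemma of the following shape: if $f$ is a $p$-centered coloring of $\subdiv{G}{s}$, then its restriction $g$ to the root vertices is a $q$-centered coloring of $G$ for some $q=\Omega(p/s)$, and $g$ clearly uses no more colors than $f$, so that $\chi_p(\subdiv{G}{s}) \ge \chi_q(G)$. To prove it I would argue by contraposition: given a connected subgraph $H$ of $G$ on which $g$ uses at most $q$ colors, each appearing at least twice, I would lift $H$ to a connected subgraph $\tilde H$ of $\subdiv{G}{s}$ by reinstating the fresh vertices along the edges of $H$, and then show that $\tilde H$ witnesses a failure of $p$-centeredness of $f$, namely that $\tilde H$ is connected, uses at most $p$ colors, and has no color appearing exactly once. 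If $H$ can be taken with $O(q)$ edges, then $\tilde H$ has $O(qs)\le p$ fresh vertices and hence at most $p$ colors, while the root colors each occur at least twice by the choice of $H$.

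The main obstacle is precisely the behaviour of the fresh vertices, and it has two faces. First, it is not automatic that a minimal bad subgraph $H$ of $(G,g)$ has few edges: minimality forces its leaves to carry colors of multiplicity exactly two, but it does not a priori bound the lengths of its internal paths, so controlling the \emph{number} of fresh vertices in $\tilde H$ (and thus its palette size) already requires care. Second, and more seriously, even once the palette of $\tilde H$ is bounded by $p$, the colors that $f$ assigns to the reinstated fresh vertices may occur only once in $\tilde H$; such a unique color would make $\tilde H$ satisfy the centered condition and destroy the argument. Overcoming this is where the structure of the~\cite{DFMS19} instance must be exploited: I expect to eliminate unique fresh colors by a closing-up/doubling step, enlarging $\tilde H$ within $\subdiv{G}{s}$ so that every fresh color is repeated while keeping $\tilde H$ connected and its palette below $p$, using that the hard instance can be chosen symmetric enough that bad subgraphs come in duplicated pieces. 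This step, rather than the invocation of the treewidth bound, is the technically delicate part.

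Granting the transfer lemma, the theorem follows by optimizing the two free parameters. The constraint $q=\Omega(p/s)$ together with $s=6t$ is balanced by setting $t=q=\Theta(\sqrt{p})$, for which the subdivision length is $\Theta(\sqrt{p})$ and the parameter loss is $\Omega(\sqrt{p})$. Writing $G:=G_{t,q}$ with these choices, we have $\subdiv{G}{6t}\in\twclass$ and
\[
\chi_p\bigl(\subdiv{G}{6t}\bigr)\;\ge\;\chi_q(G)\;\ge\;\binom{q+t}{t}\;=\;\binom{2q}{q}\;\ge\;2^{c\sqrt{p}}
\]
for a suitable constant $c>0$, using the standard estimate $\binom{2q}{q}=2^{\Theta(q)}$ with $q=\Theta(\sqrt{p})$. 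As $p$ ranges over the positive integers this produces, for each $p$, a graph in $\twclass$ whose every $p$-centered coloring uses at least $2^{c\sqrt{p}}$ colors, which is exactly the statement of the theorem.
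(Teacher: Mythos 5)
Your high-level plan --- subdivide the D\k{e}bski et al.\ construction, note that $\subdiv{G_{t,q}}{6t}\in\twclass$ because $\tw(G_{t,q})=t$, and balance $t=q=\Theta(\sqrt p)$ so that $\binom{2q}{q}\ge 2^{c\sqrt p}$ --- is exactly the paper's, including the parameter trade-off (the paper sets $t=p$ and ends up with a $(18p^2+9p+2)$-centered coloring, the same $qt=\Theta(p)$ accounting). But the proof has a genuine gap at precisely the point you flag and then defer: the ``subdivision transfer lemma'' is never established, and it cannot be obtained by treating the lower bound of~\cite{DFMS19} as a black box. Given a connected $H\subseteq G$ on which the root-restriction $g$ has no unique color, the lifted subgraph $\tilde H$ will in general contain fresh vertices whose colors occur exactly once in $\tilde H$, so $\tilde H$ need not witness a failure of $p$-centeredness of $f$. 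Your proposed fix --- a ``closing-up/doubling step'' exploiting that ``the hard instance can be chosen symmetric enough'' --- is exactly the hard part, and it is not available for the unmodified $G_{t,q}$: to duplicate a bad piece without creating new unique colors you need two root vertices whose attaching paths carry \emph{identical} sequences of fresh colors, and nothing in the black-box statement $\chi_q(G_{t,q})\ge\binom{q+t}{t}$ guarantees such a pair exists.

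The paper's resolution is to change the construction rather than to prove a transfer lemma: the base-case independent sets are inflated to size $2n_1^{t+1}n_2^{6t^2}$, and the entire induction of~\cite{DFMS19} is redone directly on the subdivided graph (Lemma~\ref{lem:lb:threats}), maintaining at each level a set $X$ of $2(n_1n_2^{6t})^{t-\tau}$ root vertices on which the map $y\mapsto(f(y),\text{colors of the path from }y\text{ to }x)$ is constant. This pigeonhole is what supplies the identically-colored duplicated paths and kills the unique fresh colors, and the explicit load bookkeeping $(\pi+1,6t\pi)$ is what bounds the number of fresh colors in each witness --- your other worry, the size of a minimal bad $H$, is likewise handled by construction rather than by pruning. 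Without some such strengthening of the inductive statement (and of the construction itself), the step from $\chi_q(G)\ge\binom{q+t}{t}$ to $\chi_p(\subdiv{G}{6t})\ge\binom{q+t}{t}$ does not go through.
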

This section is devoted to the proof of Theorem~\ref{thm:lb:tw}, as it immediately implies Theorem~\ref{thm:lb1}.
The construction is strongly based on a related lower bound of D\k{e}bski et al.~\cite{DFMS19}
that showed that for every integers $p,t \geq 1$ there is a graph $G_{p,t}$ of treewidth $t$
that requires $\binom{p+t}{t}$ colors in any $p$-centered coloring. 
We slightly inflate their construction and show that, after the inflation, $\subdiv{G_{p,t}}{6t}$
also requires roughly $\binom{p+t}{t}$ colors in any $p'$-centered coloring, for some $p'$ slightly larger than $p$.

The construction is parameterized by four integers $p \geq 1$, $t \geq 1$, $n_1 \geq 2$, and $n_2 \geq 2$. 
We inductively define graphs $G_{\pi,\tau}$ for $0 \leq \pi \leq p$ and $0 \leq \tau \leq t$ as follows.
In the base case, $G_{0,\tau}$ and $G_{\pi,0}$ are defined to be edgeless graphs on $2n_1^{t+1} \cdot n_2^{6t^2}$ vertices. 
For $1 \leq \pi \leq p$ and $1 \leq \tau \leq p$, the graph $G_{\pi,\tau}$ consists of a copy $G_{\pi,\tau,\bot}$
of $G_{\pi-1,\tau}$ and, for every $u \in V(G_{\pi,\tau,\bot})$, a copy $G_{\pi,\tau,u}$ of $G_{\pi,\tau-1}$ that is made fully adjacent to $u$.
The construction of $G_{\pi,\tau}$ differs from the corresponding construction of~\cite{DFMS19} only in the base case:
we choose much larger independent sets to start with. 
It is easy to see (and a formal argument can be found in~\cite{DFMS19}) that $\tw(G_{\pi,\tau}) = \tau$ when $\pi \geq 1$.
In what follows we study $p'$-centered colorings of $\subdiv{G_{p,t}}{6t}$ for some integer $p'$ slightly larger than $p$.

It will be convenient to treat root and fresh vertices separately and assign colors from disjoint palletes to them.
We say that $f$ is an $(n_1,n_2)$-coloring of $\subdiv{G_{p,t}}{6t}$ if its codomain is a union of two disjoint
sets $A_1$ and $A_2$ with $|A_1| = n_1$ and $|A_2| = n_2$ such that root vertices get assigned colors from $A_1$
and fresh vertices get assigned colors from $A_2$.
A $(n_1,n_2)$-coloring $f$ is $(p_1,p_2)$-centered if for every connected subgraph $H$ of $\subdiv{G_{p,t}}{6t}$,
either $f$ attains more than $p_1$ colors of $A_1$ on root vertices of $H$
or $f$ attains more than $p_2$ colors of $A_2$ on fresh vertices of $H$,
or there is a vertex of $H$ of unique color, that is, a color $i$ with $|f^{-1}(i) \cap V(H)| = 1$.
We prove the following statement.

\begin{lemma}\label{lem:lb:tw}
For every integers $p,t,n_2 \geq 1$, if we set $n_1 = \binom{p+t}{p}$, then
every $(3p+2, 18tp + 6t)$-centered $(n_1,n_2)$-coloring of $\subdiv{G_{p,t}}{6t}$ 
uses all $n_1$ colors of $A_1$ in its range.
\end{lemma}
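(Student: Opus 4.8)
The plan is to prove the lemma by contradiction, via the following strengthened inductive statement: for all $0\le\pi\le p$ and $0\le\tau\le t$ and every copy $C$ of $G_{\pi,\tau}$ inside $\subdiv{G_{p,t}}{6t}$, if the root vertices of $C$ carry fewer than $\binom{\pi+\tau}{\tau}$ distinct colors of $A_1$, then the subdivision of $C$ contains a connected subgraph $H$ in which \emph{no} color is unique (every color of $A_1\cup A_2$ occurring in $H$ occurs at least twice), while $H$ uses at most $3\pi+2$ colors of $A_1$ and at most $6t(3\pi+1)$ colors of $A_2$. Granting this, I would finish at once: if the coloring used fewer than $n_1=\binom{p+t}{p}=\binom{p+t}{t}$ colors of $A_1$, then applying the statement to the whole graph (the case $\pi=p$, $\tau=t$) produces a connected $H$ with no unique color, at most $3p+2$ colors of $A_1$, and at most $6t(3p+1)=18tp+6t$ colors of $A_2$, directly contradicting that $f$ is $(3p+2,18tp+6t)$-centered. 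Note that centeredness is invoked \emph{only} at this top level; the inductive statement itself is a pure existence-of-few-color-rich-subgraph claim.

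I would prove the inductive statement by induction on $\pi+\tau$, driven by Pascal's identity $\binom{\pi+\tau}{\tau}=\binom{\pi+\tau-1}{\tau}+\binom{\pi+\tau-1}{\tau-1}$. Fix a copy $C=G_{\pi,\tau}$ with fewer than $\binom{\pi+\tau}{\tau}$ root colors. The base cases $\pi=0$ or $\tau=0$ are handled using the huge independent sets: among the many roots joined to the parent by $6t$-subdivided edges, pigeonhole produces several sharing both their $A_1$-color and the fresh-color pattern of their connecting path, which glue through the parent into a rich gadget with one $A_1$-color and at most $6t$ $A_2$-colors. For the step, let $B=G_{\pi,\tau,\bot}\cong G_{\pi-1,\tau}$. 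If $B$ already has fewer than $\binom{\pi+\tau-1}{\tau}$ root colors, induction on $B$ yields $H$ (its bounds $3(\pi-1)+2$ and $6t(3(\pi-1)+1)$ lie within the budget of $C$). Otherwise $B$ has at least $\binom{\pi+\tau-1}{\tau}$ colors, so the attached copies $G_{\pi,\tau,u}\cong G_{\pi,\tau-1}$ together contribute fewer than $\binom{\pi+\tau-1}{\tau-1}$ colors absent from $B$. If some attached copy has fewer than $\binom{\pi+(\tau-1)}{\tau-1}$ root colors in total, induction on it yields $H$ with the required bounds (its first index is $\pi$, so its budget $3\pi+2$, $6t(3\pi+1)$ equals that of $C$). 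The decisive case is when every attached copy is color-rich yet adds few new colors, hence must reuse colors already present in $B$.

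In that decisive case I would construct $H$ by gluing. A color-rich attached copy $G_{\pi,\tau,u}$ with few new colors shares some color $c$ with $B$; pick a root $y$ of color $c$ in the copy and, using the inflated independent sets, a second root of color $c$ reachable from $u$ through parallel subdivided paths. Since $u$ is fully adjacent to its copy, in the subdivision $u$ is joined to $y$ and to many other roots by internally disjoint $6t$-vertex paths. I assemble $H$ from $u$, two roots of color $c$, and the $6t$-subdivision paths linking them through $u$, choosing the parallel routes so that every one of the intermediate fresh colors is realized twice. Because all pieces reuse the same small palette, this spends only a bounded number of extra $A_1$-colors and one block of $6t$ paired $A_2$-colors relative to the subproblem; these are exactly what the additive constants $3,2$ and the factor $6t$ in the budgets $3\pi+2$ and $6t(3\pi+1)$ are calibrated to absorb across the $\pi$ levels of the recursion.

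The main obstacle is precisely this gluing: maintaining the no-unique-color invariant for the $6t$ fresh vertices on each connecting path while keeping both color counts within their tight, linear-in-$\pi$ budgets. Duplicating root colors is easy given the enormous base sets, but duplicating every fresh color \emph{without} spawning new $A_1$-colors forces one to route through several parallel subdivided copies at once and to argue, via pigeonhole on the inflated sizes $2n_1^{t+1}n_2^{6t^2}$ against the at most $n_2^{6t}$ possible fresh-color patterns of a single connecting path, that parallel routes with matching patterns exist. Making the bookkeeping close---so each gluing adds at most three $A_1$-colors and at most $6t$ always-paired $A_2$-colors---is the technical heart of the argument and the reason the construction of~\cite{DFMS19} must be inflated before it is subdivided.
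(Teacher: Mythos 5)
Your overall plan --- induct on $\pi+\tau$ with the statement ``few root colors implies a connected subgraph with no unique color and few colors'', and invoke centeredness only once at the top level --- is genuinely different from the paper's, and it has a gap that I do not see how to close. The paper's inductive invariant is stronger: it produces a large set $I$ of colors together with a large set $X$ of anchor roots such that for every $i\in I$ and $x\in X$ there is an \emph{$i$-threat} containing $x$, i.e.\ a connected subgraph of load at most $(\pi+1,6t\pi)$ in which $i$ is the \emph{only} color occurring exactly once. Centeredness is then used \emph{inside} the induction: if a color $i$ were obtainable as the unique color of a threat both in the bottom copy $G_{\pi,\tau,\bot}$ (anchored at $x$) and in the attached copy $G_{\pi,\tau,x}$ (anchored at two roots $y_1,y_2$ with matching path patterns), gluing the three threats through the two subdivided $x$--$y_j$ paths kills the uniqueness of $i$ and of every other color and contradicts centeredness; hence the two color sets coming from the two recursive calls are disjoint, and Pascal's identity yields $\binom{\pi+\tau}{\tau}$. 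Your formulation discards exactly this information, and that is what breaks your ``decisive case''.

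Concretely: in the decisive case you know only that every attached copy $G_{\pi,\tau,u}$ is color-rich and shares some color $c$ with $B$. Your inductive hypothesis says nothing about color-rich copies, so you must build the no-unique-color subgraph from scratch, and the gadget you describe ($u$, two roots of color $c$, and two parallel subdivided paths with matching fresh patterns) does not work: the color of $u$ itself occurs exactly once in it (nothing forces $f(u)=c$), and nothing guarantees that $c$ occurs twice \emph{inside} the attached copy at all --- it may occur once there and once far away in $B$, with no way to connect the two occurrences while duplicating every color along the route. Repairing this requires precisely the anchored-threat structure: a subgraph around $x=u$ inside $B$ in which every color except one designated color $i$ is already duplicated, to be cancelled against two threats with the \emph{same} designated color $i$ on the other side. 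A further symptom that the invariant is wrong: your base case is vacuous as stated (when $\pi=0$ or $\tau=0$ the hypothesis is ``fewer than $\binom{\pi+\tau}{\tau}=1$ colors''), whereas the real content of the paper's base case is to exhibit a color class of size $2(n_1n_2^{6t})^{t}$ whose singletons serve as threats for the levels above.
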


To see why Lemma~\ref{lem:lb:tw} implies Theorem~\ref{thm:lb:tw}, set $t = p$ and $n_2 = n_1 = \binom{p+t}{p}$
and assume that $G := \subdiv{G_{p,p}}{6p}$ admits a $(18p^2 + 9p+2)$-centered coloring $f$ with a set $A$ of at most $\binom{p+t}{p}$ colors.
Let $A_i = \{i\} \times A$ for $i=1,2$ and let $f'(v) = (1,f(v)) \in A_1$ for a root vertex $v$ and $f'(v) = (2,f(v))$ for a fresh vertex $v$.
Then $f'$ is a $(n_1,n_2)$-coloring. Furthermore, $f'$ is $(3p+2,18tp+6t)$-centered: every connected subgraph of $G$ that uses at most $3p+2$ colors
from $A_1$ and at most $18tp+6t$ colors from $A_2$ uses at most $(18p^2+9p+2)$ colors in total and thus admits a vertex of unique color as $f$ is $(18p^2+9p+2)$-centered.
By Lemma~\ref{lem:lb:tw}, $f'$ uses all colors of $A_1$. Hence, $f$ uses at least $\binom{p+t}{p} = \binom{2p}{p} \geq 2^p$ colors. 
This finishes the proof of Theorem~\ref{thm:lb:tw}, assuming Lemma~\ref{lem:lb:tw}.

It remains to prove Lemma~\ref{lem:lb:tw}. To this end, we need a few definitions. 
Following~\cite{DFMS19}, for a color $i \in A_1$ and integers $k_1$ and $k_2$, a connected subgraph $H$ of $G$ is an \emph{$i$-threat of load $(k_1,k_2)$}
if $i$ is the only color that appears exactly once on $H$ and at most $k_1$ colors of $A_1$ and at most $k_2$ colors of $A_2$ appear on vertices of $H$.
We prove inductively the following claim.
\begin{lemma}\label{lem:lb:threats}
For every $0 \leq \pi \leq p$ and $0 \leq \tau \leq t$
and any copy $G'$ of $\subdiv{G_{\pi,\tau}}{6t}$ in $G = \subdiv{G_{p,t}}{6t}$
there exist a set $X \subseteq V(G')$ of root vertices of size $2(n_1 n_2^{6t})^{t-\tau}$
and a set $I \subseteq A_1$ of size $\binom{\pi + \tau}{\tau}$
such that for every $x \in X$ and $i \in I$ there exists an $i$-threat $H_{x,i}$ of load $(\pi + 1, 6t\pi)$ that contains $x$.
\end{lemma}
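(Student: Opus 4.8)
The plan is to prove Lemma~\ref{lem:lb:threats} by induction on $\pi+\tau$, with base cases $\pi=0$ and $\tau=0$ handled uniformly and the inductive step carried out for $\pi,\tau\geq 1$. All threats are taken with respect to the fixed $(3p+2,18tp+6t)$-centered $(n_1,n_2)$-coloring $f$ from Lemma~\ref{lem:lb:tw}. In the base case $G_{\pi,\tau}$ is edgeless, so $G'$ is an independent set of $2n_1^{t+1}n_2^{6t^2}$ root vertices colored from $A_1$. As $|A_1|=n_1$, some color $i$ is used on at least $2n_1^{t}n_2^{6t^2}=2(n_1n_2^{6t})^{t}\geq 2(n_1n_2^{6t})^{t-\tau}$ of them, so I would take $X$ to be $2(n_1n_2^{6t})^{t-\tau}$ such vertices and $I=\{i\}$; here $|I|=1=\binom{\pi+\tau}{\tau}$ because $\pi=0$ or $\tau=0$, and each $x\in X$ is by itself an $i$-threat whose load $(1,0)$ is dominated by $(\pi+1,6t\pi)$.

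For the inductive step, $G'$ contains the subdivided bottom copy $B=\subdiv{G_{\pi-1,\tau}}{6t}$ and, for each root vertex $u$ of $B$, a subdivided copy $C_u=\subdiv{G_{\pi,\tau-1}}{6t}$ joined to $u$ by internally disjoint paths of length $6t+1$. Applying the induction hypothesis to $B$ gives $X_\bot$ of size $2(n_1n_2^{6t})^{t-\tau}$ and $I_\bot$ of size $\binom{\pi+\tau-1}{\tau}$, with threats of load $(\pi,6t(\pi-1))$; since this load is dominated by $(\pi+1,6t\pi)$, those threats already witness $I_\bot$ through every vertex of any $X\subseteq X_\bot$, so I would set $I_\bot\subseteq I$. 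Applying the induction hypothesis to each $C_u$ gives $X_u$ of size $2(n_1n_2^{6t})^{t-\tau+1}=(n_1n_2^{6t})\,|X_\bot|$ and $I_u$ of size $\binom{\pi+\tau-1}{\tau-1}$, with threats of load $(\pi+1,6t\pi)$. By Pascal's identity it then suffices to promote $\binom{\pi+\tau-1}{\tau-1}$ colors out of the copies, uniformly over the chosen $X$, so that together with $I_\bot$ they form a set $I$ of size $\binom{\pi+\tau}{\tau}$.

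To promote a color $j$ that is available in $C_u$, I would enlarge a $j$-threat $H^u_{y,j}\subseteq C_u$ by adjoining $u$ and the length-$(6t+1)$ path $P_{u,y}$ from $u$ to $y$, which keeps the subgraph connected and makes it contain $u$. The main obstacle is that the target load $(\pi+1,6t\pi)$ at level $(\pi,\tau)$ is exactly the load already used by the copy threats, so there is no slack: the color $c(u)\in A_1$ and the at most $6t$ fresh colors carried by $P_{u,y}$ must already occur inside $H^u_{y,j}$, for otherwise adjoining $u$ and $P_{u,y}$ would either push the palette past $(\pi+1,6t\pi)$ or create a second uniquely-appearing color and spoil the threat. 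This forced absorption is precisely what the inflation pays for: the surplus factor $n_1n_2^{6t}$ of $|X_u|$ over $|X_\bot|$ is spent pigeonholing the candidates $y\in X_u$ according to the $\leq n_2^{6t}$ possible color patterns on $P_{u,y}$ and the $\leq n_1$ possible values of $c(u)$, so as to arrange that the fresh path-colors and $c(u)$ lie in the palette of $H^u_{y,j}$; matching these is the delicate combinatorial point, after which the enlargement is a genuine $j$-threat of load $(\pi+1,6t\pi)$ containing $u$.

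The remaining difficulty, and the real crux, is to make a single set of $\binom{\pi+\tau-1}{\tau-1}$ promoted colors serve every $u$ in the final $X$ while keeping $|X|=2(n_1n_2^{6t})^{t-\tau}$: the promotable colors depend on $u$, so one must pass to a subset of $X_\bot$ on which a common promotable set of the correct size survives, and the geometric budget $2(n_1n_2^{6t})^{t-\tau}$ has to be balanced simultaneously against this pigeonhole and against the absorption pigeonhole above. It is here that $(3p+2,18tp+6t)$-centeredness is used rather than mere $(p+1,6tp)$-centeredness: the auxiliary connected subgraphs assembled through $u$ during promotion may use up to about three times the per-level palette, and centeredness at the larger threshold is what certifies that such a bounded-palette subgraph carries a uniquely-appearing color, which I would then identify by pigeonhole as one of the promoted colors. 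Verifying that these two pigeonholes fit inside the available surplus, with the loads staying at $(\pi+1,6t\pi)$, is the technical heart of the proof.
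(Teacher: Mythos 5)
Your base case and the overall skeleton (induction on $\pi+\tau$, Pascal's identity $\binom{\pi+\tau-1}{\tau}+\binom{\pi+\tau-1}{\tau-1}=\binom{\pi+\tau}{\tau}$, pigeonholing on the color pattern of the subdivided edges) match the paper, but the inductive step is oriented the wrong way and the two difficulties you yourself flag as ``the crux'' are not resolved --- and in this orientation they cannot be. You keep the final $X$ inside the bottom copy $X_\bot$ and try to \emph{promote} colors $j\in I_u$ by dragging a $j$-threat $H^u_{y,j}\subseteq C_u$ up a single path $P_{u,y}$ to $u$. That single path is fatal twice over: (i) the threat $H^u_{y,j}$ already has the full target load $(\pi+1,6t\pi)$, and your ``absorption'' fix --- arranging that $f(u)$ and all $6t$ fresh colors of $P_{u,y}$ already lie in the palette of $H^u_{y,j}$ --- is not something the pigeonhole on $\phi(y)$ can deliver, since it groups the $y$'s by their path pattern but gives no control over the palettes of the inherited threats (for $\pi=1$ the threat carries at most $6t$ fresh colors in total, so you would need its fresh palette to coincide exactly with the path's, and $f(u)$ to be one of its at most two root colors); (ii) any new color introduced by $u$ or by $P_{u,y}$ appears exactly once in the enlarged subgraph, which destroys the property that $j$ is the \emph{only} uniquely-occurring color. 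You cannot repair (ii) by doubling up with a second threat $H^u_{y',j}$ and its path, because then $j$ itself appears twice and the object is no longer a $j$-threat. Finally, your ``remaining difficulty'' --- finding one promoted set that works for every $u\in X$ --- would require pigeonholing $X_\bot$ over all possible values of $I_u$, i.e.\ over subsets of $A_1$ of size $\binom{\pi+\tau-1}{\tau-1}$, and the per-level surplus factor $n_1n_2^{6t}$ is nowhere near large enough for that.

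The paper's proof avoids all of this by reversing the roles. It fixes a \emph{single} $x\in X_\bot$ and takes the final $X$ inside the one copy $\subdiv{G_{\pi,\tau,x}}{6t}$, as a subset of $X_x$ on which $\phi(y)=(f(y),\text{colors of the path }x\text{--}y)$ is constant; the colors $I_x$ then come with threats of the correct load containing each $y\in X$ for free, with no promotion needed. For a color $i\in I_\bot$, the threat $H_{i,x}$ lives in the bottom copy and has load only $(\pi,6t(\pi-1))$, so there is slack: it is extended through $x$ along \emph{two} $\phi$-identical paths to $y$ and $y'\in X$, so every new color (including $f(y)=f(y')$) appears at least twice and the load grows to exactly $(\pi+1,6t\pi)$. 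The tripled threshold $(3p+2,18tp+6t)$ is used not for your promotion step but to prove $I_\bot\cap I_x=\emptyset$: if some $i$ lay in both, gluing $H_{i,x}$ to two $i$-threats $H_{i,y_1},H_{i,y_2}$ via the two identical paths would give a connected subgraph with at most $3\pi+2$ root colors, at most $18t\pi+6t$ fresh colors, and no uniquely-occurring color, contradicting centeredness. You have the right ingredients, but the argument as proposed does not go through; the direction of the extension and the pairing of identical paths are the missing ideas.
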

\begin{proof}
We prove the lemma by induction on $\pi + \tau$. 
For the base case, if $\pi = 0$ or $\tau = 0$, $G_{\pi,\tau}$ is an edgeless graph with $2n_1^{t+1} n_2^{6t^2}$ root vertices. 
Since $f$ is an $(n_1,n_2)$-coloring, for every copy $G'$ of $\subdiv{G_{\pi,\tau}}{6t}$ in $G$, there is a color $i \in A_1$
that is attained on a set $X$ of at least $2(n_1 n_2^{6t})^t$ vertices. Since $\{x\}$ is an $i$-threat of load $(1, 0)$ for every $x \in X$, the claim holds.

Consider now the case $\pi,\tau \geq 1$ and let $G'$ be a copy of $\subdiv{G_{\pi,\tau}}{6t}$ in $G$.
We apply the induction hypothesis to $\subdiv{G_{\pi,\tau,\bot}}{6t}$ (isomorphic to $\subdiv{G_{\pi-1,\tau}}{6t}$), obtaining a set $I_\bot$ of size $\binom{\pi-1+\tau}{\tau}$
and a set $X_\bot$ of root vertices. Pick arbitrary $x \in X_\bot$ and consider the graph $\subdiv{G_{\pi,\tau,x}}{6t}$, which is isomorphic to $\subdiv{G_{\pi,\tau-1}}{6t}$.
Applying again the induction hypothesis to this set, we obtain a set $I_x$ of size $\binom{\pi+\tau-1}{\tau-1}$ and a set $X_x$ of root vertices. 

With every $y \in X_x$ we associate a tuple $\phi(y)$ which consists of $f(y)$ and the sequence of colors of the $6t$ fresh vertices on the path between $y$ and $x$. 
Since there are $n_1 n_2^{6t}$ possible values of $\phi(y)$ and $|X_x| = 2(n_1 n_2^{6t})^{t-(\tau-1)}$, there exists a set $X \subseteq X_x$ of size $2(n_1 n_2^{6t})^{t-\tau}$
such that $\phi$ is constant on $X$. 

Assume there exists $i \in I_\bot \cap I_x$. Pick two distinct vertices $y_1,y_2 \in X$ (note that $|X| \geq 2$) and consider a subgraph $H$ of $G'$ that consists
of the $i$-threat $H_{i,x}$ of load $(\pi,6t(\pi-1))$ in $\subdiv{G_{\pi,\tau,\bot}}{6t}$,
the $i$-threat $H_{i,y_1}$ of load $(\pi+1,6t\pi)$ and 
the $i$-threat $H_{i,y_2}$ of load $(\pi+1,6t\pi)$ in $\subdiv{G_{\pi,\tau,x}}{6t}$
and the paths between $x$ and $y_1$ and $y_2$. From the loads we infer that on $H$ the coloring $f$ attains at most $3\pi+2 \leq 3p+2$ colors from $A_1$
and at most $18t\pi + 6t \leq 18tp + 6t$ colors from $A_2$. However, since $\phi(y_1) = \phi(y_2)$, $H$ has no unique color. This is a contradiction with the properties of $f$.
Hence, $I_\bot \cap I_x = \emptyset$.

Let $I = I_\bot \cup I_x$. We have $|I| = \binom{\pi-1+\tau}{\tau} + \binom{\pi+\tau-1}{\tau-1} = \binom{\pi+\tau}{\tau}$. 
It remains to show that for every $i \in I$ and $y \in X$ the graph $G'$ contains an $i$-threat of load $(\pi+1, 6t\pi)$.
This is immediate for $i \in I_x$ from the properties of $I_x$ and $X_x \supseteq X$. 
For $i \in I_\bot$, consider the $i$-threat $H_{i,x}$ of load $(\pi, 6t(\pi-1))$ in $\subdiv{G_{\pi,\tau,\bot}}{6t}$.
Define $H$ to be $H_{i,x}$ extended with a path from $x$ to $y$ and $x$ to $y'$ for some $y' \in X$, $y \neq y'$. 
Then, $H$ is connected, contains $y$, and is an $i$-threat of load $(\pi+1, 6t\pi)$ as desired. 
This finishes the proof of the lemma.
\end{proof}

By Lemma~\ref{lem:lb:threats}, in $G = \subdiv{G_{p,t}}{6t}$ we obtain a set $I \subseteq A_1$ of size $\binom{p+t}{t}$ and a set $X$ of size $2$.
The existence of the corresponding threats in $G$ ensure that $f$ attains at least $\binom{p+t}{t}$ colors on root vertices.
This finishes the proof of Lemma~\ref{lem:lb:tw} and of Theorem~\ref{thm:lb:tw}.

\section{Bounded degree graphs}\label{sec:bnddeg}
In this section we prove Theorem~\ref{thm:lb2}. The proof follows the same strategy as the current best lower bounds for acyclic and star colorings for graphs with bounded degrees~\cite{alon1991acyclic,fertin2004star}. 

Note that Theorem~\ref{thm:lb2} is obvious for $p=1$. Fix $p\geq 2$ and choose $n \in \mathbb{N}$ large enough with respect to $p$. Define
\begin{align*}
q_n := ((12e/p)^p n^{1-p} \ln n)^{\frac{1}{2p-1}}
\end{align*}
We will show that the Erd\H os-R\'enyi random graph $G(n,q_n)$ satisfies the desired lower bound with high probability. Denote by $d_n$ the maximum degree of $G(n,q_n)$. As $nq_n/\ln n \to \infty$, standard results on Random Graph Theory (see e.g.~\cite[Theorem 3.4]{frieze}) imply that 
\begin{align}\label{eq:deg}
\mathbb{P}[nq_n/2 \leq d_n \leq 2nq_n]= 1-o(1).
\end{align}
All the asymptotic notations in this section refer to $n\to \infty$. If $d_n\leq 2nq_n$, then
\begin{align*}
d_n\leq 2 \left(\frac{12e n}{p}\right)^{\frac{p}{2p-1}} (\ln n)^{\frac{1}{2p-1}}   
\end{align*}
Additionally, if $d_n\geq nq_n/2$, then for large $n$ we have $n\leq d_n^2$ and
\begin{align*}
n \geq  2^{-\frac{2p-1}{p}} \frac{p}{12 e} d_n^{2-1/p} (\ln n)^{-1/p} \geq \frac{p}{48 e} d_n^{2-1/p} (\ln d_n)^{-1/p} .
\end{align*}

We will show that, with high probability, $G(n,q_n)$ has no $p$-centered coloring using at most $n/2$ colors. Together with~\eqref{eq:deg}, this gives the existence of a graph satisfying the theorem for $c=\frac{1}{96e}$.

Consider a coloring of the vertex set of $G(n,q_n)$ using at most $n/2$ colors and let $V_1,\dots,V_{n/2}$  be the (possibly empty) color classes. For the sake of simplicity, we may assume that $n$ is multiple of $4$. We can select $m= n/4$ disjoint subsets $U_1,\dots ,U_{m}$ such that for every $ i \in [m]$ we have $U_i \subseteq V_j$ for some $j\in [n/2]$ and $|U_i| = 2$. Denote $U_i = \{x_i, y_i\}$.
Let $\mathcal{S}$ be the set of ordered subsets $\mathbf{s}=(s_1,\dots ,s_p) \subseteq [m]$ and let $\Sigma$ be the set of permutations $\sigma$ of length $p$ satisfying $\sigma(p) = 1$. To each $(\mathbf{s},\sigma)\in \mathcal{S}\times \Sigma$ we associate the following edge-set:
\begin{align*}
E_{\mathbf{s}, \sigma} = \{x_{s_i} x_{s_{i+1}} \; : \;  i\in [p-1]\}\cup \{x_{s_1} y_{s_{\sigma(1)}}\} \cup \{y_{s_{\sigma(i)}} y_{s_{\sigma(i+1)}} \; : \; i\in [p-1]\}. 
\end{align*}
The edges in $E_{\mathbf{s}, \sigma}$ span a path of length $2p-1$, with endpoints $x_{s_p}$ and $y_{s_{\sigma(p)}}$. By the choice of $\sigma$, the $p$-th and $(2p)$-th vertices in the path, belong to the set $U_{s_1}$. 

Let $E$ be the edge-set of the complete graph on $n$ vertices, and let $E_{q_n}\subseteq E$ be the random set obtained by adding each element of $E$ independently with probability $q_n$, i.e.\ the edge-set of $G(n,q_n)$. For each $(\mathbf{s}, \sigma)\in \mathcal{S}\times \Sigma$, consider the event $A_{\mathbf{s}, \sigma}= [E_{\mathbf{s}, \sigma}\subseteq E_{q_n}]$. Set 
\begin{align*}
X &= \sum_{(\mathbf{s}, \sigma)\in \mathcal{S}\times \Sigma} \mathbbm{1}_{A_{\mathbf{s}, \sigma}}\\
\mu &= \mathbb{E}[X] = \sum_{(\mathbf{s}, \sigma)\in \mathcal{S}\times \Sigma} \mathbb{P}[A_{\mathbf{s}, \sigma}]\\
\Delta &= \sum_{A_{\mathbf{s}, \sigma} \sim A_{\mathbf{s}', \sigma'}} \mathbb{P}[A_{\mathbf{s}, \sigma} \cap A_{\mathbf{s}', \sigma'}]
\end{align*}
where we write $A_{\textbf{s}, \sigma} \sim A_{\textbf{s'}, \sigma'}$ if $E_{\textbf{s}, \sigma} \cap E_{\textbf{s'}, \sigma'}\neq \emptyset$. 

Janson's inequality~(see e.g.~\cite[Theorem 8.1.1]{alon}) states that
\begin{align}\label{eq:Janson}
\mathbb{P}[X=0]\leq \exp\{-\mu+\Delta/2\}.
\end{align}

We have
\begin{align*}
\mu = (m)_p  (p-1)! q_n^{2p-1} \geq \frac{1}{2} m^p (p-1)! q_n^{2p-1} \geq \frac{1}{2p} (\frac{np}{4e})^p q_n^{2p-1} = \frac{3^p}{2p} n \ln n\geq (3/2) n\ln n,
\end{align*}
where we used that $(m)_p\geq m^p/2$ holds for $m$ sufficiently large with respect to $p$, and that $p!\geq (p/e)^p$.

For any $\mathbf{s},\mathbf{s'}\in \mathcal{S}$, we use $\mathbf{s}\cap \mathbf{s'}$ to denote the intersection of the ordered sets, as \emph{unordered} sets. If $A_{\mathbf{s}, \sigma} \sim A_{\mathbf{s}', \sigma'}$ then necessarily $|\mathbf{s} \cap \mathbf{s}'| \geq 2$. In order to bound $\Delta$ from above, we will count the contribution of all pairs of events whose sequences intersect in at least two elements. For any $2 \leq i \leq p$ we have 
\begin{align*}
|\{((\mathbf{s}, \sigma),(\mathbf{s}', \sigma'))\in (\mathcal{S}\times \Sigma)^2 :\,  |\mathbf{s} \cap \mathbf{s}'| = i\}| = \binom{m}{i} \binom{m-i}{p-i} \binom{m-p}{p-i} (p!(p-1)!)^2 = O( n^{2p-i}).
\end{align*}

We claim that if $(\mathbf{s},\sigma)\neq (\mathbf{s}',\sigma')$, then $|E_{\mathbf{s}, \sigma} \cap E_{\mathbf{s}', \sigma'}| \leq 2|\mathbf{s} \cap \mathbf{s}'| - 2$ and so $\mathbb{P}(A_{\mathbf{s}, \sigma} \cap A_{\mathbf{s}', \sigma'}) \leq q_n^{4p - 2|\mathbf{s} \cap \mathbf{s}'|}$. Clearly, we have that $|E_{\mathbf{s}, \sigma} \cap E_{\mathbf{s}', \sigma'}| \leq 2|\mathbf{s} \cap \mathbf{s}'| -1$ since the intersection spans a collection of paths. In particular, $|E_{\mathbf{s}, \sigma} \cap E_{\mathbf{s}', \sigma'}| = 2|\mathbf{s} \cap \mathbf{s}'| -1$ only if $E_{\mathbf{s}, \sigma} \cap E_{\mathbf{s}', \sigma'}$ spans a path containing the edge $x_{s_1} y_{s_{\sigma(1)}}$. As $\sigma(p)=1$, we have $s_{\sigma(1)},s_{\sigma(p)}\in \mathbf{s} \cap \mathbf{s}'$, implying that  $s_i\in \mathbf{s} \cap \mathbf{s}'$ for every $ i\in [p]$, or equivalently, $|\mathbf{s}\cap \mathbf{s}'|=p$. In such a case, the only way that $|E_{\mathbf{s}, \sigma} \cap E_{\mathbf{s}', \sigma'}|=2p-1$ is that $\mathbf{s}=\mathbf{s'}$ and $\sigma=\sigma'$.

Note that $nq_n^2\to \infty$. It follows that
\begin{align*}
\Delta = O\left(\sum_{i=2}^p n^{2p-i} q_n^{4p-2i}\right) = O( (nq_n^2)^{2p-2}) = O(n^{1-\frac{1}{2p-1}}(\ln n)^{\frac{4p-4}{2p-1}}) = o(\mu).
\end{align*}
Now we can apply~\eqref{eq:Janson} and obtain
\begin{align*}
\mathbb{P}[X = 0] &\leq \exp\{- \mu + \Delta/2\} = o(n^{-n}).
\end{align*}
Since there are at most $n^{n/2}$ colorings of the vertex set with at most $n/2$ colors, the probability that at least one of them is $p$-centered is, by a union bound, $o(1)$, concluding the proof of Theorem~\ref{thm:lb2}.

\acknowledgements
Part of this research has been done at Structural Graph Theory Downunder workshop at MATRIX research center in November 2019.
We are very thankful to MATRIX for their hospitality.
We also thank Tereza Klimo\v{s}ov\'{a}, Jan Volec, David Wood, and Lena Yuditsky for insightful discussions. 
Finally, we thank the two anonymous referees for their helpful comments on an earlier version of the paper. 

\bibliographystyle{abbrv}
\bibliography{refs}

\end{document}